\theoremstyle{plain}
\newtheorem{thm}{Theorem}[section]
  \theoremstyle{plain}
  \theoremstyle{plain}
  \newtheorem{lem}[thm]{Lemma}
  \newtheorem{conjecture}[thm]{Conjecture}
\date{}
\title{\vspace{-0.7cm}Maximum union-free subfamilies}
\author{
Jacob Fox \thanks{Department of Mathematics, MIT, Cambridge, MA 02139-4307. Email: fox@math.mit.edu. Research
supported by a Simons Fellowship.}
\and
Choongbum Lee \thanks{Department of Mathematics, UCLA, Los
Angeles, CA, 90095. Email: choongbum.lee@gmail.com. Research
supported in part by a Samsung Scholarship.}
\and
Benny Sudakov \thanks{Department of Mathematics, UCLA, Los Angeles, CA 90095.
Email: bsudakov@math.ucla.edu. Research supported
in part by NSF CAREER award DMS-0812005 and by a USA-Israeli BSF grant.}
}
\begin{document}
\maketitle

\begin{abstract}
An old problem of Moser asks: how large of a union-free subfamily
does every family of $m$ sets have? A family of sets is called \emph{union-free} if there are no three
distinct sets in the family such that the union of two of the sets is equal to the third set. We show that every
family of $m$ sets contains a union-free subfamily of size at least $\lfloor \sqrt{4m+1}\rfloor -1$ and that this bound is tight.
This solves Moser's problem  and proves a conjecture of Erd\H{o}s and
Shelah from 1972.

More generally, a family of sets is \emph{$a$-union-free} if there are no $a+1$ distinct sets in the
family such that one of them is equal to the union of $a$
others. We determine up to an absolute multiplicative constant factor the size of the largest
guaranteed $a$-union-free subfamily of a family of $m$ sets. Our result
verifies in a strong form a conjecture of Barat, F\"{u}redi, Kantor, Kim and Patkos.
\end{abstract}

\section{Introduction}

A set $A$ of integers is {\it sum-free} if there are no $x,y,z \in A$ such that $x+y=z$. Erd\H{o}s \cite{Er65}
in 1965 proved that every set of $n$ nonzero integers
contains a sum-free subset of size at least $n/3$. The proof is an influential application of the
probabilistic method in extremal number theory.
This result was rediscovered by Alon and Kleitman \cite{AK}, who showed how to find a sum-free subset of size at least  $(n+1)/3$.
Finally, Bourgain \cite{Bo} using harmonic analysis improved the lower bound to $(n+2)/3$. This result is the current
state of the art for this problem. It is not even known if
the constant factor $1/3$ is best possible.

The analogous problem in extremal set theory has also been studied for a long time. A family of sets is called
\emph{union-free} if there are no three distinct sets
$X,Y,Z$ in the family such that $X\cup Y=Z$. An old problem of Moser asks: how large of a union-free subfamily
does every family of $m$ sets have? Denote this number
by $f(m)$. The study of $f(m)$ has attracted considerable interest. Riddell observed that $f(m) \geq \sqrt{m}$
(this follows immediately from Dilworth's theorem, see
below). Erd\H{o}s and Koml\'{o}s \cite{ErKo} determined the
correct order of magnitude of $f(m)$ by proving that $f(m) \le 2\sqrt{2m}+4$. They conjectured that
$f(m)=(c-o(1))\sqrt{m}$ for some constant $c$, without specifying
the right value of $c$. In 1972, Erd\H{o}s and Shelah \cite{ErSh} improved both the upper and lower bounds
by showing that $\sqrt{2m}-1 < f(m) < 2\sqrt{m}+1$ (the lower bound was also obtained independently by Kleitman \cite{Kl73}).
Erd\H{o}s and Shelah conjectured that their upper bound is
asymptotically tight.
\begin{conjecture}
$f(m)=(2-o(1))\sqrt{m}$.
\end{conjecture}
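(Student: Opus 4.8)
The upper bound $f(m) < 2\sqrt{m}+1$ is already in hand (Erd\H{o}s--Shelah), so the real task is the matching lower bound $f(m) \ge (2-o(1))\sqrt{m}$. In fact I would aim directly for the exact inequality $f(m)\ge \lfloor\sqrt{4m+1}\rfloor-1$, which would follow from showing that if $\mathcal F$ is a family of $m$ sets whose largest union-free subfamily has size $g$, then $m \le \tfrac14 g(g+2)$ (note $\tfrac14 g(g+2)\approx (g/2)^2$, and indeed $m=k(k+1)$ forces $g=2k$). The plan is an extremal/exchange argument organized around a \emph{maximum} union-free subfamily.

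\textbf{Set-up and classification.} Let $\mathcal G\subseteq\mathcal F$ be a union-free subfamily of maximum size $g=|\mathcal G|$. By maximality, for every external set $W\in\mathcal F\setminus\mathcal G$ the family $\mathcal G\cup\{W\}$ contains a bad triple, and that triple must involve $W$. Hence each $W$ plays exactly one of two roles: either $W=P\cup Q$ for some $P,Q\in\mathcal G$ (a \emph{union}), or $W\cup Q=R$ for some $Q,R\in\mathcal G$ with $W\subsetneq R$ (a \emph{summand}). Either way $W$ is charged to an unordered pair of $\mathcal G$. A union-type $W$ is uniquely determined by its pair, since $P\cup Q$ is a function of $\{P,Q\}$; and for incomparable $P,Q\in\mathcal G$ one automatically has $P\cup Q\notin\mathcal G$, else $\{P,Q,P\cup Q\}$ would be a bad triple inside $\mathcal G$.

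\textbf{Calibration, and where the difficulty sits.} Using only the union role together with the trivial ``at most one external per pair'' yields $m-g\le\binom g2$, i.e. $m\le g+\binom g2\approx g^2/2$, which recovers precisely the classical bound $f(m)>\sqrt{2m}-1$. To meet the upper bound I must sharpen the external count from $\sim g^2/2$ to $\sim g^2/4$: in other words, show that \emph{at most about half} of the $\binom g2$ pairs of $\mathcal G$ produce an external set. This factor of two is exactly the gap between $\sqrt{2m}$ and $2\sqrt m$, and it is genuinely tight: in the extremal two-coordinate (``grid'') families, where $m\approx k^2$ and $g\approx 2k$, exactly half of the pairs have their union present, so any correct argument must be sharp there.

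\textbf{The main obstacle: gaining the factor two.} Two issues must be overcome. First, the \emph{summand}-type externals are not determined by their pair (for fixed $Q\subsetneq R$, every $W$ with $R\setminus Q\subseteq W\subsetneq R$ qualifies), so a priori they could swamp the count. I would eliminate them by choosing $\mathcal G$, among all maximum union-free subfamilies, so as to minimize the potential $\sum_{A\in\mathcal G}|A|$, and then run an exchange argument: swapping a summand $W\subsetneq R$ in for $R$ strictly lowers this potential, so if $\mathcal G\setminus\{R\}\cup\{W\}$ were still union-free we would contradict minimality; analysing the bad triple that this swap is forced to create (it uses $W$ but not $R$) should collapse the summand configuration into a union configuration or exclude it. Second, and this is the genuine crux, one must prove that among the incomparable pairs of the extremally chosen $\mathcal G$ at most half have their union lying in $\mathcal F$. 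The route I would pursue is a local pairing: match each ``used'' pair $\{P,Q\}$ (with $R=P\cup Q\in\mathcal F$) to a distinct ``unused'' pair manufactured from $P,Q,R$ and the extremal structure, so that the used pairs inject into at most half of all pairs. Establishing this $2$-to-$1$ structure, and then checking the boundary arithmetic so that it delivers exactly $m\le\tfrac14 g(g+2)$ and hence $f(m)\ge\lfloor\sqrt{4m+1}\rfloor-1$, is where I expect essentially all the work to lie; the classification and the elimination of summands are the soft part, while the halving of the pair count is the hard and tight heart of the matter.
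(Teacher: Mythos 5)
Your outline is not a proof: the two steps that carry all the difficulty are explicitly left as intentions, and the target they aim at is mis-calibrated. On the first point, the summand-elimination exchange has a concrete hole. After you swap $W$ for $R$ in the potential-minimizing family $\mathcal{G}$, minimality only tells you that $(\mathcal{G}\setminus\{R\})\cup\{W\}$ contains a bad triple through $W$; that triple can perfectly well be of summand type again, say $W\cup Q'=R'$ with $Q',R'\in\mathcal{G}\setminus\{R\}$, so the configuration does not ``collapse into a union configuration,'' and nothing you have said bounds the possibly many summands $W$ with $R\setminus Q\subseteq W\subsetneq R$ charged to a single pair $\{Q,R\}$. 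The halving claim --- that at most half of the pairs of $\mathcal{G}$ have their union in $\mathcal{F}$ --- is the entire content of the theorem, and for it you offer only the name of a hoped-for injection, as you yourself concede. As it stands the proposal establishes nothing beyond the classical $f(m)>\sqrt{2m}-1$.

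Moreover, the ``boundary arithmetic'' you postponed actually refutes your target. The inequality $m\le\frac14 g(g+2)$ is false: the paper's own extremal example, the $n\times n$ grid of intervals $X_{ij}$, has $m=n^2$ sets and maximum union-free subfamily of size exactly $g=2n-1$ (at most $2n-1$ by the row/column deletion argument, at least $2n-1$ by the theorem itself), while $\frac14 g(g+2)=\frac14(2n-1)(2n+1)=n^2-\frac14<m$. The statement actually equivalent to $f(m)=\lfloor\sqrt{4m+1}\rfloor-1$ is $4m+1<(g+2)^2$, i.e.\ $m\le\frac{(g+2)^2-2}{4}$, which permits roughly $\frac{g+1}{2}$ more sets than your bound; so any pairing argument engineered to prove $m\le\frac14 g(g+2)$ must already fail on the extremal family, and this slack has to be accounted for somewhere in the combinatorics. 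It is instructive to compare how the paper gains the factor of two, since it is not by counting pairs inside one fixed maximum subfamily: the family is partitioned into levels of iteratively maximal sets, an auxiliary graph $H$ on level indices records (Lemma \ref{prop:union_or_2connected}) when a union of low-rank sets lands in a given level, and $H$ is greedily colored. The crucial gain is the second part of Lemma \ref{lemma:groupsize}: an independent set of levels whose least index $a_1$ has an $H$-edge reaching $a_1-b$ levels below it can be augmented by the two chains witnessing that edge, yielding a union-free family of size $\sum_j|\mathcal{F}_{a_j}|+2(a_1-b)-1$; feeding this into the coloring (Lemma \ref{great}) gives $m\le\alpha+\sum_{i\ge 2}(\alpha-2i+3)$, which is exactly the claimed bound. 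Your scheme has no mechanism playing the role of those appended chains, and without one the factor of two is out of reach.
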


We verify this conjecture and solve Moser's problem.

\begin{thm} \label{thm:main1_intro}
For all $m$, we have $$f(m)= \lfloor \sqrt{4m+1} \rfloor-1.$$
\end{thm}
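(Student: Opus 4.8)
The plan is to prove both inequalities through a single extremal reformulation. Write $t(\mathcal F)$ for the size of the largest union-free subfamily of $\mathcal F$, and record two elementary facts: under inclusion every chain and every antichain is union-free, and deleting sets never increases $t(\mathcal F)$. A short computation shows that the condition $f(m)=\lfloor\sqrt{4m+1}\rfloor-1$ is equivalent to the statement $f(m)=k \iff \tfrac{k(k+2)}{4}\le m<\tfrac{(k+1)(k+3)}{4}$, so it suffices to establish two things. \emph{Lower bound:} any family with $t(\mathcal F)\le t$ has at most $h(t):=\lceil (t+1)(t+3)/4\rceil-1$ members. \emph{Upper bound:} for each $t$ there is a family of exactly $h(t)$ sets with $t(\mathcal F)\le t$. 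Given these, every intermediate value of $m$ in $(h(t-1),h(t)]$ is handled by passing to a subfamily of the extremal example, since that cannot raise $t(\mathcal F)$ above $t=f(m)$.

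For the upper bound I would take a product of two chains. Split the ground set into row-tokens $x_1,\dots,x_{a-1}$ and column-tokens $y_1,\dots,y_{b-1}$, and set $S_{i,j}=\{x_1,\dots,x_{i-1}\}\cup\{y_1,\dots,y_{j-1}\}$ for $1\le i\le a$, $1\le j\le b$. Then $S_{i,j}\cup S_{i',j'}=S_{\max(i,i'),\max(j,j')}$, so set-union is coordinatewise maximum and a subfamily corresponds to a point set $P\subseteq[a]\times[b]$. One checks that $(p,q)$ is a union of two distinct members of $P$ precisely when $P$ contains a point strictly to its left in row $p$ and a point strictly below it in column $q$; hence $P$ is union-free iff every point of $P$ is leftmost in its row or bottommost in its column. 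Letting $L$ (resp.\ $B$) be the leftmost-in-row (resp.\ bottommost-in-column) points, we get $P=L\cup B$ with $|L|\le a$, $|B|\le b$, and the lexicographically smallest point lying in $L\cap B$, so $|P|\le a+b-1$, with equality along a monotone staircase. A near-square rectangle already gives the extremal family when $h(t)=ab$; for the remaining values of $m$ I would delete a suitable bottom-left staircase of cells from a near-square grid, which (as the characterization shows for non-rectangular shapes) lowers the maximum union-free subfamily in controlled unit steps and realizes the count $h(t)$ exactly.

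The hard direction is the lower bound, i.e.\ proving $m\le h(t)\approx \tfrac{(t+1)^2}{4}$ when $t(\mathcal F)\le t$. The naive tools fall short of the sharp constant: Dilworth's theorem covers $\mathcal F$ by at most $t$ chains each of length at most $t$, giving only $m\le t^2$; and taking a maximum union-free family $\mathcal G$ and observing that every other set either is a union $G_i\cup G_j$ of two members (at most $\binom{t}{2}$ such) or combines with some member to form another member ($A\cup G_i=G_j$) recovers at best the Erdős--Shelah bound $m\lesssim\binom{t+1}{2}$, i.e.\ the constant $\sqrt 2$. The extra factor of two to reach the optimal constant $2$ is exactly the crux, and I expect the main obstacle to be controlling the second type of relation: for a fixed comparable pair $G_i\subsetneq G_j$ the sets $A$ with $G_j\setminus G_i\subseteq A\subsetneq G_j$ can be very numerous, so they cannot be counted pairwise and must instead be bounded through the global union-free constraint (no $t{+}1$ of them, together with $\mathcal G$, may be union-free). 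The plan is therefore to balance the two directions of the union relation simultaneously—showing that abundance of one type forces scarcity of the other—so that the total is halved to $\tfrac{(t+1)(t+3)}{4}$, matching the grid construction.
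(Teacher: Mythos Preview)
Your upper-bound construction is essentially the Erd\H{o}s--Shelah grid and your analysis of it (leftmost-in-row/bottommost-in-column, yielding $a+b-1$) is the same argument the paper gives. The paper handles intermediate $m$ slightly differently---rather than deleting a staircase, it observes that the minimum element $S_{1,1}$ lies in every maximal union-free subfamily, so removing it drops both $m$ and $t(\mathcal F)$ by one, and then monotonicity of $f$ fills in the remaining values---but either device works.

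The lower bound, however, is not a proof but a restatement of the difficulty. You have correctly diagnosed why Dilworth gives only $m\le t^2$ and why the pair-counting argument stalls at the Erd\H{o}s--Shelah constant $\sqrt 2$, and your final sentence (``balance the two directions of the union relation simultaneously'') is an aspiration, not a mechanism. There is no indication of what structure would let you trade off the two types of relations, and the specific obstacle you flag---that for a fixed $G_i\subsetneq G_j$ the sets $A$ with $A\cup G_i=G_j$ can be arbitrarily numerous---remains untouched.

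The paper's missing idea is this: stratify $\mathcal F$ into levels $\mathcal F_1,\dots,\mathcal F_t$ by rank, and build an auxiliary graph $H$ on $\{1,\dots,t\}$ where $(i,i')$ is an edge if there exist two disjoint chains from level $i$ to level $i'-1$ whose pairwise unions (with one exception) all overshoot level $i'$. The key observations are that (a) any union $X\cup Y\in\mathcal F_{i'}$ with $X,Y$ of rank $\le i$ forces $(i,i')\in E(H)$, so an independent set of levels is union-free; and (b) the two chains witnessing an edge $(b,a_1)$ can be \emph{adjoined} to such a union-free family, gaining $2(a_1-b)-1$ extra elements. A greedy colouring of $H$ then gives colour classes whose sizes telescope: the $i$th class contributes at most $\alpha-2i+3$ to $m$, and summing yields exactly $m\le h(\alpha)$. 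This ``two disjoint chains'' gadget is what converts the qualitative balance you allude to into a quantitative $-2i+3$ saving per colour class; nothing in your outline suggests it.
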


Let $a \ge 2$ be an integer. A family of sets is called {\it $a$-union-free}
if there are no $a+1$ distinct sets $X_{1},\cdots,X_{a+1}$ such that $X_{1}\cup\cdots\cup X_{a}=X_{a+1}$.
Let $g(m,a)$ be the minimum over all families of $m$ sets of the size
of the largest $a$-union-free subfamily. In particular, $g(m,2)=f(m)$. The same proof which shows
$f(m) > \sqrt{2m}-1$ also shows that $g(m,a) > \sqrt{2m}-1$.
Recently, Barat, F\"{u}redi, Kantor, Kim and Patkos \cite{FuBaKaKiPa} proved that $g(m,a)\le c(a+a^{1/4}\sqrt{m})$ for some absolute constant $c$ and made the following conjecture on the growth of $g(m,a)$.

\begin{conjecture}
$\lim_{a\rightarrow\infty}\liminf_{m\rightarrow\infty}\frac{g(m,a)}{\sqrt{m}}=\infty$.
\end{conjecture}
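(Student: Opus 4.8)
The plan is to prove the conjecture in the strong quantitative form that matches the upper bound of Barat, F\"uredi, Kantor, Kim and Patkos: I will show there is an absolute constant $c>0$ such that every family of $m$ sets contains an $a$-union-free subfamily of size at least $c\,a^{1/4}\sqrt{m}$, once $m$ is large compared with $a$. Since $a^{1/4}\to\infty$, this immediately gives $\liminf_{m\to\infty} g(m,a)/\sqrt{m}\ge c\,a^{1/4}$ and hence the stated limit; combined with the known bound $g(m,a)\le c'(a+a^{1/4}\sqrt m)$ it also pins down $g(m,a)$ up to a constant factor, which is the ``strong form'' advertised in the abstract. So everything reduces to the lower bound $g(m,a)=\Omega(a^{1/4}\sqrt m)$.

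The two basic building blocks are antichains and chains, both of which are $a$-union-free for \emph{every} $a$: in an antichain no member contains another, so a member cannot equal a union of distinct others; and in a chain the union of any $a$ distinct members equals the largest of them, hence is one of those members rather than a new set. I would therefore study a family $\mathcal{F}$ of $m$ sets through its inclusion poset and split on its width $w$ (the size of a largest antichain). Fix the threshold $T=a^{1/4}\sqrt{m}$. If $w\ge T$, a maximum antichain is an $a$-union-free subfamily of the desired size and we are done. Otherwise, by Dilworth's theorem $\mathcal{F}$ is covered by fewer than $T$ chains, so the longest chain has length at least $m/w>m/T=a^{-1/4}\sqrt{m}$.

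A single long chain is already $a$-union-free, but in this second case it yields only $a^{-1/4}\sqrt{m}$ sets, a factor of $\sqrt{a}$ short of the target. The heart of the argument is thus a \emph{combining step}: from a cover of $\mathcal{F}$ by $w$ chains I aim to extract an $a$-union-free subfamily of size $\Omega(\sqrt{a}\cdot m/w)$, i.e.\ a factor $\sqrt{a}$ better than the average chain length. Balancing this gain against the antichain bound $w$ at $w=a^{1/4}\sqrt{m}$ makes both quantities equal to $a^{1/4}\sqrt{m}$, which is exactly what is sought. The mechanism I would exploit is that forming a forbidden configuration $X_1\cup\cdots\cup X_a=X_{a+1}$ forces $a$ distinct sets to cover one larger set, which is costly in cardinalities, since $\max_i|X_i|\le|X_{a+1}|\le\sum_i|X_i|$. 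Accordingly I would select sets from the chains according to their sizes---grouping cardinalities into windows and keeping a bounded number of sets per chain per window---so that any $a$ chosen sets are either too small to cover, or too few in a given size range to assemble, the candidate union.

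The main obstacle is precisely this combining lemma: controlling the forbidden $(a+1)$-configurations once sets are drawn from many different chains. A naive random sparsification fails, because a single family can contain as many as $\sim m\binom{m}{a}$ union relations, so deleting one set per relation would destroy almost everything. What should rescue the argument is that these relations are highly structured---organized by the inclusion order and by cardinality---rather than independent, and the task is to make this quantitative: to show that a size-window selection across the $w$ chains retains $\Omega(\sqrt{a}\cdot m/w)$ sets while provably containing no $a$-fold union. I expect the delicate point to be the exact dependence on $a$: it is the interplay between the number of size windows, the number of sets kept per chain, and the arithmetic constraint $|X_{a+1}|\le\sum_i|X_i|$ that should produce the exponent $a^{1/4}$ rather than a weaker power, and extracting the right constant together with the precise range of $m$ for which the bound holds will require the most care.
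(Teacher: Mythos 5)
Your overall reduction is the same as the paper's: prove the quantitative bound $g(m,a)=\Omega(a^{1/4}\sqrt m)$ (this is Theorem \ref{thm:main2_intro}), which immediately gives the conjecture, and your two building blocks (chains and antichains are $a$-union-free for every $a$) are correct. But the proposal has a genuine gap at exactly the step you flag as ``the main obstacle'': the combining lemma is not proved, and worse, it is \emph{false} as stated. You ask that every family of $m$ sets whose inclusion poset has width $w$ contain an $a$-union-free subfamily of size $\Omega(\sqrt a\cdot m/w)$. The tightness construction of Barat et al., reproduced at the end of Section 3 of the paper, refutes this: take $k=\lceil\sqrt a\rceil$ copies of the $n\times n$ grid of intervals, stacked so that every set of $\mathcal{F}_{\ell+1}$ contains every set of $\mathcal{F}_{\ell}$. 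This family has $m=kn^2$ sets and width exactly $n$ (sets from different grids are comparable, so every antichain lies inside one grid, and each grid has width $n$), so your lemma would demand an $a$-union-free subfamily of size $\Omega(\sqrt a\cdot m/w)=\Omega(a\,n)$; but the paper shows its largest $a$-union-free subfamily has size at most $5a+4(a^{1/4}+1)\sqrt m=O(a+\sqrt a\,n)$. With $a$ a large constant and $n\to\infty$, the demanded bound exceeds the truth by a factor of order $\sqrt a$, so no absolute constant can make the combining lemma true. The moral is that when $w$ is much smaller than the threshold $a^{1/4}\sqrt m$, one cannot beat the average chain length $m/w$ by a factor $\sqrt a$; the correct answer stays at $\Theta(a^{1/4}\sqrt m)$, so a one-shot width dichotomy with a per-chain gain cannot work.

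A secondary weakness is the cardinality-window heuristic itself: the constraint $\max_i|X_i|\le|X_{a+1}|\le\sum_i|X_i|$ does not force the $X_i$ to have comparable sizes, since a forbidden union can consist of one set of nearly full cardinality together with $a-1$ tiny sets patching the remainder, and such relations cut across any window structure. The paper avoids cardinalities entirely and works order-theoretically: Theorem \ref{key} shows that every poset on $m\ge a$ elements contains either an \emph{$a$-degenerate} set (no element above $a$ others; this generalizes antichains) or a \emph{ladder} --- a chain $X_1<\cdots<X_\ell$ with an antichain $\mathcal{Y}_i$ of size $\alpha$ hanging below each $X_i$, separated between consecutive rungs --- of size $\max\{a,a^{1/4}\sqrt m/3\}$, and Lemma \ref{lemma:ladder} shows a ladder is $a$-union-free. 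The ladder is the structure that genuinely achieves a multiplicative gain (its size is $\ell\alpha$, a chain length times an antichain size), and it is found not by Dilworth but by an iterative argument in which one repeatedly discards ``bad'' elements grouped by blocks of consecutive levels, so that either a large degenerate set or ladder appears, or the height of the surviving poset drops geometrically faster than its size, terminating in a large antichain. If you want to salvage your plan, that is the replacement you need for the combining step.
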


We prove this conjecture in the following strong form, which further gives the correct order of magnitude for $g(m,a)$.

\begin{thm} \label{thm:main2_intro}
For all $m \geq a \geq 2$, we have $g(m,a) \ge \max\{a,a^{1/4}\sqrt{m}/3\}$.
\end{thm}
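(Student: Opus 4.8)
The plan is to combine the trivial lower bound with a Dilworth-type dichotomy, and then to treat the genuinely hard ``flat'' regime by a gluing argument.

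First, the easy half. Since any family of at most $a$ sets contains no $a+1$ distinct sets at all, it is vacuously $a$-union-free; as $m\ge a$ this already gives $g(m,a)\ge a$. For the second term I would work with the containment partial order on the given family $\mathcal F$. Two observations make Dilworth's theorem applicable: a chain can never witness a relation $X_1\cup\cdots\cup X_a=X_{a+1}$, because the left-hand side equals the largest of the sets involved, which is one of them; and in an antichain no set is a proper subset of another, so no set is a union of $a$ distinct others. Hence both chains and antichains are $a$-union-free. Writing $h$ for the length of a longest chain and $w$ for the size of a largest antichain, Dilworth gives a cover by $w$ chains of length at most $h$, so $wh\ge m$ and thus $\max\{h,w\}\ge\sqrt m$. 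Setting $W:=a^{1/4}\sqrt m/3$, if $\max\{h,w\}\ge W$ we are done. In particular, when $a\le 81$ we have $W\le\sqrt m\le\max\{h,w\}$, so the whole statement follows; I may therefore assume $a>81$ and $h,w<W$ in what follows.

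It remains to treat the flat regime $h,w<W$, where neither a single chain nor a single antichain is large enough and one must glue together many short pieces. The guiding tool is the following sufficient condition, immediate from the definitions: $\mathcal G$ is $a$-union-free provided that for every $Z\in\mathcal G$, either fewer than $a$ members of $\mathcal G$ are proper subsets of $Z$, or $\bigcup\{Y\in\mathcal G:Y\subsetneq Z\}\subsetneq Z$, i.e.\ the sets below $Z$ fail to cover it. The target size $a^{1/4}\sqrt m=\sqrt a\cdot\sqrt{m/\sqrt a}$ strongly suggests the shape of the construction: using the rank decomposition of $\mathcal F$ into $h$ antichains $L_1,\dots,L_h$ with $\sum_i|L_i|=m$, one wants to assemble $\mathcal G$ from about $\sqrt a$ levels at full width, or equivalently about $\sqrt a$ sets from each of roughly $\sqrt m$ levels, so that $|\mathcal G|$ reaches $\approx a^{1/4}\sqrt m$ while the certificate above still holds.

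The main obstacle is precisely verifying that certificate for the glued family, and this is where the parameter $a$ must be exploited honestly. Controlling only the \emph{number} of sets below a given member is hopeless on its own: insisting that every member has fewer than $a$ sets below it forces $|\mathcal G|$ down to order $a$, which is just the trivial bound. Thus, for members with many sets below them, one must instead certify the covering-failure alternative, selecting the pieces so that below each such $Z$ the chosen subsets admit a single dominating set (a forest-like, tree-ordered selection), while permitting enough branching to spend the budget afforded by $a$. The delicate point is to realise the gain of $a^{1/4}$ over $\sqrt m$ through covering-failure without the argument secretly depending on the ground-set size, since the final bound involves only $m$. If a clean structural selection proves elusive, a fallback is a two-phase probabilistic argument: first pass to a large subfamily in which every set has boundedly many sets below it, then randomly sparsify and delete one set from each surviving $a$-union relation, optimising the sampling probability against the number of such relations. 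Either way, the crux is this combining lemma in the flat regime; the Dilworth dichotomy and the trivial bound dispose of everything else.
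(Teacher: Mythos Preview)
Your reductions are fine: the trivial bound $g(m,a)\ge a$, the observation that chains and antichains are $a$-union-free, and the Dilworth step disposing of $a\le 81$ all match the paper's setup. But the flat regime $h,w<W$ is the entire content of the theorem, and your proposal does not prove anything there---you say so yourself (``the crux is this combining lemma''). Neither of your two suggested routes is close to working as stated. The ``forest-like'' selection asks that below every chosen $Z$ the chosen subsets sit under a single dominating element; this is so restrictive that it essentially forces a near-chain structure, and there is no argument that one can extract $\Theta(a^{1/4}\sqrt m)$ elements satisfying it. The probabilistic fallback is also not an argument: you give no bound on the number of $a$-union relations surviving sparsification, and without one the deletion step can remove everything.

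What is actually needed, and what the paper does, is a specific combinatorial structure that certifies $a$-union-freeness for a different reason than your covering-failure criterion. The paper defines an $(\ell,\alpha)$-\emph{ladder}: a chain $X_1<\cdots<X_\ell$ together with antichains $\mathcal Y_1,\ldots,\mathcal Y_\ell$ of size $\alpha$ with every $Y\in\mathcal Y_i$ satisfying $Y\le X_i$ and $Y\not\le X_{i-1}$. The family $\bigcup_i\mathcal Y_i$ is $a$-union-free because any union of $a$ members lies below the top $X_t$ reached, yet contains a member not below $X_{t-1}$, so the union cannot land in any $\mathcal Y_j$. The ladder is then found by an iterative refinement: block the $h$ levels into groups of a carefully chosen width $x_i$, call an element \emph{nice} if it dominates at least $a$ elements within its own block (so by pigeonhole it dominates an antichain of size $\lceil a/x_i\rceil$ in a single level), and \emph{bad} otherwise. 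The bad elements in each block form an $a$-degenerate set, hence are few; if the subposet of nice elements still has large height one reads off a large ladder, and otherwise the height has dropped by a factor of four while the number of elements has only halved, so one iterates with $x_{i+1}=x_i/2$. After $\log_2 x_0$ rounds the height is so small relative to the number of surviving elements that a single level is already an antichain of size $\ge W$. Your sketch has none of this machinery, and without a concrete structure playing the role of the ladder there is no proof.
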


The lower bound in Theorem \ref{thm:main2_intro} is tight apart from an absolute multiplicative constant factor by the above mentioned upper bound from \cite{FuBaKaKiPa}. Of course, if $m \leq a$, we have trivially $g(m,a)=m$.

In the next two sections, we prove Theorem \ref{thm:main1_intro} and Theorem \ref{thm:main2_intro},
respectively. For the proofs of these theorems, it is helpful to study the structure of the partial order on sets
given by inclusion.
Recall that a chain (antichain) in a poset is a collection of pairwise comparable (incomparable) elements. Dilworth's
theorem \cite{Di}
implies that any poset with $m$ elements contains a chain or antichain of size at least $\sqrt{m}$. Notice that a
chain or antichain of sets is $a$-union-free for
all $a$, but the lower bound $g(m,a) \geq \sqrt{m}$ we get from this simple argument is not strong enough. For the
proof of Theorem \ref{thm:main2_intro}, we find
considerably larger structures in posets which imply that the subfamily is $a$-union-free. The existence of such
large structures in posets may be of independent interest.

\section{ Erd\H{o}s-Shelah conjecture}
In this section we prove that every family of $m$ sets contains a union-free subfamily of size at least
$\lfloor \sqrt{4m+1} \rfloor -1$. This verifies in a strong form the Erd\H{o}s-Shelah conjecture.
We finish the section with a modification of the construction of Erd\H{o}s and Shelah which shows that this bound is tight.

Let $\mathcal{F}$ be a family of $m$ sets and assume that the maximum
union-free subfamily has size $\alpha$. Let $t$ be the length of
the longest chain in $\mathcal{F}$. Let $\mathcal{F}_{t}$ be the
family of maximal (by inclusion) sets in $\mathcal{F}$, and for $i=t-1,\cdots,1$,
inductively define $\mathcal{F}_{i}$ as the family of maximal sets
in $\mathcal{F}\setminus(\bigcup_{j=i+1}^{t}\mathcal{F}_{j})$. We call
$\mathcal{F}_{i}$ the\emph{ $i$th level of $\mathcal{F}$}. Note that since there
exists a chain of length $t$, $\mathcal{F}_{i}$ is non-empty for
all $i$, and for every set in $\mathcal{F}_{i}$ there is a chain of length
$t-i+1$ starting at that set which hits every level above $\mathcal{F}_{i}$
exactly once. Furthermore, if $X \subset Y$,
then $X$ lies in a level below $Y$, and thus each level is an antichain.
For a set $X\in\mathcal{F}_{i}$,
let $i$ be the \emph{rank} of $X$.

We define an auxiliary graph $H=H(\mathcal{F})$ with vertex set $V(H)=\{1,\ldots,t\}$ as follows. The pair $(i,i')$ with $i<i'$
is an edge of $H$ if there exist two disjoint
chains $\mathcal{X}$ and $\mathcal{Y}$ which both start at $\mathcal{F}_{i}$ and
end at $\mathcal{F}_{i'-1}$ such that

\vspace{0.15cm}
{\bf (i)} $\mathcal{X},\mathcal{Y}$
both have length $i'-i$, and

\vspace{0.1cm}

{\bf (ii)} at most one set in $\{ X\cup Y \, | \,  X\in\mathcal{X},Y\in\mathcal{Y}\}$ has rank $i'$ and all the rest of the sets of this
form have rank greater than $i'$ or do not belong to family $\cal F$.
\vspace{0.15cm}

\noindent
Thus, by definition, we can see that if $(i,i')$ is an edge of $H$ and $i<i''<i'$, then
$(i,i'')$ and $(i'', i')$ are also edges of $H$, i.e., we have \emph{monotonicity} of $H$.

We first prove that if there are many pairs of sets $X,Y \in \mathcal{F}$ such that
$X\cup Y\in\mathcal{F}$, then there are many edges of $H$.
\begin{lem}
\label{prop:union_or_2connected}Let $i<i'$ and $X,Y$ be two sets
of rank at most $i$ such that $X\cup Y\in\mathcal{F}_{i'}$.
Then $(i,i')$ is an edge of $H$.\end{lem}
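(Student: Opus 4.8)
The plan is to build the two required chains directly from $X$ and $Y$ using the upward-chain property of the levels. Recall that from any set of rank $r$ there is a chain meeting each level $r, r+1, \ldots, t$ exactly once. Since $\mathrm{rank}(X)\le i$ and $\mathrm{rank}(Y)\le i$, I fix such an upward chain through $X$, producing sets $A_\ell\in\mathcal{F}_\ell$ with $A_\ell\supseteq X$ for every $\ell$ with $i\le \ell\le i'-1$, and likewise an upward chain through $Y$ producing $B_\ell\in\mathcal{F}_\ell$ with $B_\ell\supseteq Y$. I then take $\mathcal{X}=\{A_i,\ldots,A_{i'-1}\}$ and $\mathcal{Y}=\{B_i,\ldots,B_{i'-1}\}$. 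Each is a chain starting in $\mathcal{F}_i$, ending in $\mathcal{F}_{i'-1}$, and meeting every intermediate level once, so both have length $i'-i$; this is condition (i). Here I use that $i'\le t$ (because $X\cup Y\in\mathcal{F}_{i'}$) so the upward chains reach level $i'-1$, and that $\mathrm{rank}(X),\mathrm{rank}(Y)\le i$ so they pass through level $i$.

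The single fact driving everything else is that a common superset of $X$ and $Y$ must contain $Z:=X\cup Y$, which has rank $i'$. I would first use this to obtain disjointness for free: if $A_\ell=B_\ell$ for some $\ell$, then this common set contains both $X$ and $Y$, hence contains $Z$, and therefore has rank at least $i'$; but it lies in $\mathcal{F}_\ell$ with $\ell\le i'-1$, a contradiction. Since equal sets have equal rank and every level in the range is strictly below $i'$, the chains $\mathcal{X}$ and $\mathcal{Y}$ are automatically disjoint.

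The same rank bookkeeping yields condition (ii). Every element of $\{A_j\cup B_k\}$ contains $X\cup Y=Z$, so any such union lying in $\mathcal{F}$ has rank at least $i'$; in particular none has rank smaller than $i'$. Moreover such a union has rank exactly $i'$ only if it equals $Z$, since any set properly containing the rank-$i'$ set $Z$ has strictly larger rank. Because condition (ii) concerns the \emph{set} of distinct union values rather than the pairs producing them, the value $Z$ is counted once; hence at most one element of $\{A_j\cup B_k : A_j\in\mathcal{X},\,B_k\in\mathcal{Y}\}$ has rank $i'$ and every other has rank greater than $i'$ or lies outside $\mathcal{F}$. This is exactly (ii), so $(i,i')$ is an edge of $H$.

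The step I expect to need the most care is recognizing that conditions (i)--(ii) and disjointness all flow from the one observation that a common superset of $X$ and $Y$ contains $Z$ and so has rank at least $i'$; once this is isolated, no further control over the chains is needed beyond routing them through $X$ and $Y$. In particular, the natural worry that many pairs $(A_j,B_k)$ might give a union of rank $i'$ dissolves immediately, since all such unions coincide with the single set $Z$.
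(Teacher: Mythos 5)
Your proof is correct and follows essentially the same route as the paper: extend upward chains through $X$ and $Y$ to levels $i,\ldots,i'-1$, then use rank monotonicity under inclusion (every common superset, and every union $A_j\cup B_k$, contains $Z=X\cup Y$ of rank $i'$) to get both disjointness of the chains and condition (ii). The only cosmetic difference is that the paper starts its chains at $X$ and $Y$ themselves and passes to subchains at the end, whereas you truncate at level $i$ from the outset.
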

\begin{proof}
Let $X,Y$ be sets which have rank $a,b$ respectively ($a,b\le i$),
and whose union lies in $\mathcal{F}_{i'}$. Let $\mathcal{X}=\left\{ X_{a}=X,X_{a+1},\cdots,X_{i'-1}\right\} $
be a chain starting at $X$ and ending in $\mathcal{F}_{i'-1}$.
Similarly define a chain $\mathcal{Y}=\left\{ Y_{b}=Y,Y_{b+1},\cdots,Y_{i'-1}\right\}$ starting at $Y$ and ending in
$\mathcal{F}_{i'-1}$. If there exists an element $Z$ which lies in both $\mathcal{X}$
and $\mathcal{Y}$, then $X_{a},Y_{b}\subset Z$ and thus the rank
of $X \cup Y=X_{a}\cup Y_{b}$ is at most the rank of $Z$ which is at most
$i'-1$. This contradicts our assumption that $X\cup Y\in\mathcal{F}_{i'}$.
Therefore $\mathcal{X}$ and $\mathcal{Y}$ are disjoint. Moreover,
for every $X_{c}\in\mathcal{X}$ and $Y_{d}\in\mathcal{Y}$, we have
$X\cup Y\subset X_{c}\cup Y_{d}$ and since $X\cup Y$ has rank $i'$,
we know that $X_{c}\cup Y_{d}$ either is equal to $X\cup Y$, has
rank larger than $i'$, or is not in $\mathcal{F}$. By taking a subchain $\{X_i, \ldots, X_{i'-1}\}$ of $\mathcal{X}$
and a subchain $\{Y_i, \ldots, Y_{i'-1}\}$  of $\mathcal{Y}$, we can see that $(i,i')$ is an edge of $H$.
\end{proof}

We will show that if $\alpha$ is small, then the total number
of sets cannot be $m$, and this will give us a contradiction.
Thus we need some tools which allow us to bound the size of the levels. The next lemma shows that by
collecting the levels into groups whose indices form independent sets in $H$,
we can obtain a good bound on the size of the levels.
\begin{lem}
\label{lemma:groupsize}Let $a_1<\ldots<a_k$ be the vertices of an independent set in $H$. Then $\sum_{j=1}^{k}|\mathcal{F}_{a_{j}}|\le\alpha.$
Further, if $(a_1,b)$ is an edge of $H$ with $b<a_1$, then
\[\sum_{j=1}^{k}|\mathcal{F}_{a_{j}}|\le\alpha-2(a_{1}-b)+1.\]
\end{lem}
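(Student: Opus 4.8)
The plan is to prove both inequalities by exhibiting union-free subfamilies of $\mathcal{F}$ and comparing their sizes with $\alpha$. For the first inequality I would set $\mathcal{G}=\bigcup_{j=1}^{k}\mathcal{F}_{a_{j}}$; since distinct levels are disjoint, $|\mathcal{G}|=\sum_{j=1}^{k}|\mathcal{F}_{a_{j}}|$, so it suffices to show $\mathcal{G}$ is union-free. Suppose for contradiction that $X\cup Y=Z$ with $X,Y,Z\in\mathcal{G}$ distinct. Then $X,Y$ are proper subsets of $Z$, so they lie in strictly lower levels, and their ranks are members $a_{j_1},a_{j_2}$ of the independent set that are smaller than the rank $a_{j'}$ of $Z$. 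Applying Lemma~\ref{prop:union_or_2connected} to $X,Y$ (with $i=\max(a_{j_1},a_{j_2})<a_{j'}=i'$) shows that $(i,a_{j'})$ is an edge of $H$ joining two distinct vertices of the independent set, a contradiction. Hence $\mathcal{G}$ is union-free and $\sum_{j}|\mathcal{F}_{a_{j}}|\le\alpha$.

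For the refined bound, suppose $(a_1,b)$ is an edge with $b<a_1$, witnessed by disjoint chains $\mathcal{X},\mathcal{Y}$ of length $a_1-b$ running from level $b$ to level $a_1-1$. I would enlarge $\mathcal{G}$ to $\mathcal{G}'=\mathcal{G}\cup\mathcal{X}\cup\mathcal{Y}$; since the chain elements have rank at most $a_1-1<a_1$ and the two chains are disjoint from each other and from $\mathcal{G}$, we get $|\mathcal{G}'|=\sum_{j}|\mathcal{F}_{a_j}|+2(a_1-b)$. The goal is then to show that deleting a single set from $\mathcal{G}'$ yields a union-free family, which gives $\sum_{j}|\mathcal{F}_{a_j}|+2(a_1-b)-1\le\alpha$, as required.

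The heart of the argument is a case analysis of a putative violation $A\cup B=C$ in $\mathcal{G}'$, organized by the location of $C$ and using that $A,B$ are proper subsets of $C$, hence of strictly smaller rank. If $C$ is a chain element, then $A,B$ have rank below $a_1$ and so both lie in the chains; two elements of one chain union to an element of that chain (not distinct from $C$), while one element from each chain has union of rank at least $a_1$ by condition (ii), too large to equal $C$, so this case is empty. If $C\in\mathcal{F}_{a_{j'}}$ and at least one of $A,B$ lies in $\mathcal{G}$, then the larger input rank is some $a_{j''}$ with $a_{j''}<a_{j'}$, and Lemma~\ref{prop:union_or_2connected} again produces the edge $(a_{j''},a_{j'})$ between two distinct vertices of the independent set, a contradiction. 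The remaining case is $C\in\mathcal{F}_{a_{j'}}$ with $A,B$ both chain elements; a same-chain union stays within that chain and has rank below $a_1$, so it cannot equal $C$, whence $A$ and $B$ come from different chains, say $A\in\mathcal{X}$ and $B\in\mathcal{Y}$. Here Lemma~\ref{prop:union_or_2connected} gives that $(\max(\mathrm{rank}\,A,\mathrm{rank}\,B),a_{j'})$ is an edge, and since its lower endpoint is below $a_1<a_{j'}$, monotonicity of $H$ forces $(a_1,a_{j'})$ to be an edge as well. If $j'\ge 2$ this contradicts independence, so necessarily $j'=1$, i.e.\ $C$ has rank exactly $a_1$.

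Thus every surviving violation has the form $A\cup B=C$ with $A\in\mathcal{X}$, $B\in\mathcal{Y}$ and $C$ of rank exactly $a_1$. By condition (ii) in the definition of the edge $(a_1,b)$, at most one set arising as such a union has rank $a_1$, so all these violations share the single common set $C$. Deleting $C$ from $\mathcal{G}'$ therefore destroys every violation, leaving a union-free family of size $\sum_{j}|\mathcal{F}_{a_j}|+2(a_1-b)-1$, which completes the proof. I expect the main obstacle to be precisely this last case — ruling out unions of two chain elements that land in a higher level $\mathcal{F}_{a_{j'}}$ with $j'\ge2$ — and the key device for overcoming it is to combine Lemma~\ref{prop:union_or_2connected} with the monotonicity of $H$ to manufacture the forbidden edge $(a_1,a_{j'})$.
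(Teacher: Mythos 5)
Your proposal is correct and takes essentially the same approach as the paper: you build the same family $\mathcal{X}\cup\mathcal{Y}\cup\bigl(\bigcup_{j}\mathcal{F}_{a_j}\bigr)\setminus\{Z\}$, and your case analysis on the location of the output set invokes Lemma \ref{prop:union_or_2connected} and condition (ii) exactly as the paper does. The only cosmetic differences are that you phrase it as ``every violation has output $Z$, then delete $Z$'' rather than verifying union-freeness of the family with $Z$ already removed, and in the case of two chain elements unioning into a level $a_{j'}$ with $j'\ge 2$ you use monotonicity of $H$, where the paper just applies Lemma \ref{prop:union_or_2connected} with the rank upper bound $a_{j'-1}$ directly.
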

\begin{proof}
The first assertion is a straightforward corollary of Lemma
\ref{prop:union_or_2connected}, as the lemma implies that
$\bigcup_{j=1}^{k}\mathcal{F}_{a_{j}}$ is a union-free subfamily.

Let $\mathcal{X}$ and $\mathcal{Y}$ be two chains guaranteed by the fact $(b,a_1)$ is an edge of $H$.
Let $Z$ be the unique set (if it exists) in $\left\{ X \cup Y| X\in\mathcal{X}, Y\in\mathcal{Y}\right\} $
which has rank $a_{1}$. We claim that $\mathcal{W}=\mathcal{X}\cup\mathcal{Y} \cup\Big(\bigcup_{j=1}^{k}\mathcal{F}_{a_{j}}\Big) \setminus \left\{ Z\right\}$ is a union-free subfamily. Assume that this is not the case and let
$W_{1},W_{2},W_{3}\in\mathcal{W}$ be three distinct sets in $\mathcal{W}$ such
that $W_{1}\cup W_{2}=W_{3}$. If $W_{3}\in\mathcal{F}_{a_{i}}$ for
some $i \ge 2$, then since each level is an antichain we know that
both $W_{1}$ and $W_{2}$ has rank at most $a_{i-1}$. However, by
Lemma \ref{prop:union_or_2connected} this would contradict the
assumption that $a_{i-1}$ and $a_i$ are not adjacent in $H$. We may therefore assume that $W_{3}\in\mathcal{X}\cup\mathcal{Y}\cup\mathcal{F}_{a_{1}}$.
If $W_{3}\in\mathcal{F}_{a_{1}}$, then $W_{1},W_{2}$ must have smaller
rank than $a_{1}$ and thus lie in $\mathcal{X}\cup\mathcal{Y}$. As $W_1,W_2,W_3$ are distinct, $W_3=W_1 \cup W_2$, and $\mathcal{X}$ and $\mathcal{Y}$ are chains, the sets $W_1$, $W_2$ cannot both lie in $\mathcal{X}$ or both lie in $\mathcal{Y}$. Recall that (by condition (ii) above) $Z$ was the only
set in $\{ X\cup Y \, | \,  X\in\mathcal{X},Y\in\mathcal{Y}\}$ with rank $a_{1}$. Since $W_{3}\neq Z$ and $W_3=W_1 \cup W_2$, the rank of $W_3$ is not $a_1$. If $W_{3}\in\mathcal{X}$, then since $W_{1},W_{2},W_{3}$
are distinct and $\mathcal{X}$ is a chain, one of the sets $W_{1}$ or $W_{2}$ must lie in $\mathcal{Y}$.
Without loss of generality assume that $W_{1}\in\mathcal{Y}$. This
implies that $W_{1}\cup W_{3}=W_{3}$ which is impossible again by condition
(ii) since all the sets in $\{ X\cup Y \, | \,  X\in\mathcal{X},Y\in\mathcal{Y}\}$ have rank at least $a_1$ and the rank of
$W_3$ is smaller than $a_1$. Similarly,
$W_{3}\notin\mathcal{Y}$. Therefore,
there cannot exist such sets $W_{1},W_{2},W_{3}$, and $\mathcal{W}$
is a union-free subfamily whose size satisfies \[
\sum_{j=1}^{k}|\mathcal{F}_{a_{j}}|+2(a_{1}-b)-1 \leq |\mathcal{W}| \le\alpha,\]
which implies our claimed inequality.
\end{proof}

Now we are prepared to prove Theorem \ref{thm:main1_intro}. The main idea
is to properly color the vertices of $H$ using as few colors as possible
(i.e., partition the vertex set of $H$ into as few independent sets as possible) in order to maximize the power
of Lemma \ref{lemma:groupsize}. Consider the following greedy algorithm
for finding such a partition into independent sets. The first independent set $I_1$ contains $1$.
We find the least $a_2^1$ which is not adjacent to $1$ and add it to $I_1$. After finding the $j$th element $a_j^1$ of $I_1$, we find the least
$a_{j+1}^1>a_j^1$ which is not adjacent to $a_j^1$ and add it to $I_1$. We continue this procedure until we cannot add any more vertices.
Note that by the monotonicity condition satisfied by graph $H$, the set $I_1$ is indeed an independent set.
Assume that we finished constructing $I_i$. The independent set $I_{i+1}$ contains the least $a_1^{i+1}$ not in any of the previous independent sets.
After finding the $j$th element $a_j^{i+1}$ of $I_{i+1}$, we find the least
$a_{j+1}^{i+1}>a_j^{i+1}$ which is not in any of the previously chosen independent sets and is not adjacent to $a_j^{i+1}$ and add it to $I_{i+1}$. We continue this procedure until we cannot add any more vertices. Note that by the monotonicity condition satisfied by graph $H$, the set $I_{i+1}$ is indeed an independent set. We continue picking out independent sets until there are no remaining vertices of $H$.

By the first part of Lemma \ref{lemma:groupsize}, the
sum of the sizes of the levels with index in $I_1$ satisfies $\sum_{j \in I_1}|\mathcal{F}_{j}|\le\alpha$.
For the other independent sets we can use the second part of Lemma \ref{lemma:groupsize}
to obtain a better bound.
\begin{lem}\label{great}
For all $i>1$, $\sum_{j\in I_i}|\mathcal{F}_{j}|\le\alpha-2i+3$.\end{lem}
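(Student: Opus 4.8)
The plan is to derive the bound directly from the second part of Lemma~\ref{lemma:groupsize}. Applied with $a_1 = a_1^i$, the smallest index in $I_i$, that lemma says that if $(b, a_1^i)$ is an edge of $H$ with $b < a_1^i$, then $\sum_{j \in I_i} |\mathcal{F}_j| \le \alpha - 2(a_1^i - b) + 1$. Hence it suffices to exhibit an edge $(b, a_1^i)$ of $H$ with $a_1^i - b \ge i - 1$, for then $\sum_{j \in I_i} |\mathcal{F}_j| \le \alpha - 2(i-1) + 1 = \alpha - 2i + 3$. So the whole task reduces to locating a neighbor of $a_1^i$ in $H$ that lies at least $i - 1$ positions below it.

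Write $v = a_1^i$. The key claim I would establish is that $v$ is adjacent in $H$ to at least one vertex of each earlier class $I_1, \dots, I_{i-1}$. For each $p \le i-1$, let $w_p$ be the largest element of $I_p$ smaller than $v$; this exists because $a_1^p$, the smallest index avoiding $I_1 \cup \cdots \cup I_{p-1}$, satisfies $a_1^p \le v$ (as $v$ also avoids this union) and $a_1^p \ne v$ (as $v \notin I_p$), so $a_1^p < v$. I claim $v$ is adjacent to $w_p$. Suppose not. Since $v$ is the smallest index outside $I_1 \cup \cdots \cup I_{i-1}$, it avoids $I_1 \cup \cdots \cup I_{p-1}$; thus at the stage of the greedy procedure just after $w_p$ was added to $I_p$, the vertex $v$ exceeds $w_p$, avoids the earlier classes, and (by assumption) is non-adjacent to $w_p$, so $v$ is an admissible choice. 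Moreover, no admissible vertex lies strictly between $w_p$ and $v$: any such vertex would have been added to $I_p$, contradicting the maximality of $w_p$ among elements of $I_p$ below $v$. Hence the greedy would select $v$ as the next element of $I_p$, contradicting $v \notin I_p$. Therefore $v$ is adjacent to $w_p$ for all $p \le i-1$.

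The vertices $w_1, \dots, w_{i-1}$ are distinct, since they belong to different classes, and all lie in $\{1, \dots, v-1\}$. As these $i-1$ distinct integers cannot all fit into the $i-2$ positions $\{v-(i-2), \dots, v-1\}$, the smallest of them, call it $b$, satisfies $v - b \ge i - 1$. Then $(b, v)$ is an edge of $H$ with $v - b \ge i - 1$, and applying the second part of Lemma~\ref{lemma:groupsize} yields $\sum_{j \in I_i}|\mathcal{F}_j| \le \alpha - 2(i-1) + 1 = \alpha - 2i + 3$.

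The main obstacle is the adjacency claim of the second paragraph: one must argue carefully that, were $v$ non-adjacent to $w_p$, the greedy would be forced to select $v$. This rests on the maximality of $w_p$ (ruling out any admissible vertex strictly between $w_p$ and $v$) and needs to cover both the case where $I_p$ continues past $w_p$ and the case where $w_p$ is the last element of $I_p$; in the latter the procedure halted precisely because no admissible vertex above $w_p$ existed, which again forbids $v$ from being non-adjacent to $w_p$. The monotonicity of $H$ underlies this reasoning, as it is what guarantees each $I_p$ is genuinely independent and that the greedy's non-adjacency test against the most recently added element $w_p$ is the only constraint one needs to check.
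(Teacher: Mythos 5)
Your proof is correct and takes essentially the same approach as the paper's: for each earlier class $I_p$ you take the largest element $w_p$ below the minimum $v$ of $I_i$, use the greedy construction to show $v$ must be adjacent in $H$ to each $w_p$, and then pigeonhole these $i-1$ distinct neighbors to get an edge $(b,v)$ with $v-b\ge i-1$, to which the second part of Lemma~\ref{lemma:groupsize} applies. The only difference is that the paper asserts the adjacency claim in one sentence, whereas you spell out the underlying greedy argument in full.
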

\begin{proof}
Let $a$ be the least element in $I_i$. For each $j<i$, let $a_j$
be the largest vertex in $I_j$ with $a_j<a$. Since $a \not \in I_j$ by the greedy algorithm, we know that $a_j$ is adjacent to $a$ in graph $H$.
Since there are $i-1$ independent sets $I_j$ with $j<i$, there is a vertex $b$ adjacent to $a$ in graph $H$ such that $a-b\ge i-1$. Thus, by the second part of Lemma \ref{lemma:groupsize}, we have \[
\sum_{j \in I_i}|\mathcal{F}_{j}|\le\alpha-2(i-1)+1=\alpha-2i+3.\]
\end{proof}

\noindent
Lemma \ref{great} in particular implies that the total number of independent sets is at
most $\lfloor\frac{\alpha+3}{2}\rfloor$.

\begin{proof}[Proof of Theorem \ref{thm:main1_intro}]
We first prove the lower bound on $f(m)$. The proof splits into two cases.
Suppose first that $n^2 \le m < n^2+n$ for some integer $n$ so that $\lfloor \sqrt{4m+1} \rfloor=2n$. Assume for contradiction that the size $\alpha$ of the
largest union-free subfamily satisfies $\alpha \le 2n-2$.

This implies with Lemma \ref{great} that \[ m=\sum_{i} \Big|\bigcup_{j \in I_i} \mathcal{F}_{j}\Big|\le(2n-2)+\sum_{i=2}^{n}(2n-2i+1)=(2n-2)+(n-1)^{2}=n^{2}-1\]
which contradicts $m \geq n^2$. Hence, in this case we must have $\alpha \ge 2n-1=\lfloor \sqrt{4m+1} \rfloor-1$.

We may therefore assume that $n^{2}+n\le m<(n+1)^{2}$ for some integer $n$ so that $\lfloor \sqrt{4m+1} \rfloor=2n+1$. Assume for contradiction that
$\alpha \leq 2n-1$. This implies with Lemma \ref{great} that \[
m=\sum_{i} \Big|\bigcup_{j \in I_i} \mathcal{F}_{j}\Big| \le(2n-1)+\sum_{i=2}^{n+1}(2n-2i+2)=(2n-1)+n^{2}-n=n^{2}+n-1\]
which contradicts $m \geq n^2+n$ and we must have $\alpha\ge 2n = \lfloor \sqrt{4m+1} \rfloor - 1$. This completes the proof of the lower bound in Theorem \ref{thm:main1_intro}.

We use the following construction given by Erd\H{o}s and Shelah \cite{ErSh} to show that the lower bound on $f(m)$ is tight. We begin with the
case $m=n^2$ is a perfect square. Consider the collection  $\cal F$ of $n^2$ sets $X_{ij}=\{x \in \mathbb{N} \, | \, n+1-i \leq x \leq n+j \}$ where
$1\leq i,j \leq n$. We may assume that each set $X_{ij}$
is placed on the $(i,j)$ position of the $n \times n$ grid.
Let $\mathcal{F}'$ be a union-free subfamily of $\cal F$. Note that $\mathcal{F}'$ cannot contain a
triple $X_{i'j}, X_{ij'}, X_{ij}$ with $i'<i$ and $j'<j$ since $X_{i'j} \cup X_{ij'}=X_{ij}$.
Delete from each column in the grid the bottommost set in $\mathcal{F}'$.
Note that we removed at most $n$ sets and after removing these sets, there will be no set of $\mathcal{F}'$ remaining in the lowest row of the grid. Then remove from each row in the grid the leftmost remaining set
in $\mathcal{F}'$. This removes at most $n-1$ additional sets. Now there cannot be any remaining set as
otherwise $\mathcal{F}'$ will contain some triple $X_{i'j}, X_{ij'}, X_{ij}$ with $i'<i$ and $j'<j$.
Since we removed at most $n+n-1=2n-1$ sets, this
implies that $\mathcal{F}'$ has size at most $2n-1$. Thus our theorem is tight
for $m=n^2$. We can modify this construction by taking $\cal F$ to be the collection $X_{ij}=\{x \in \mathbb{N} \, | \, n+1-i \leq x \leq n+j \}$ with
$1\leq i\leq n+1$ and $1\leq j \leq n$. In this way we will have a
total of $m=n^2+n$ sets, the corresponding grid will have size $(n+1)\times n$, and the maximum union-free subfamily will have size
at most $2n$. This again shows that our theorem is tight for $m=n^2+n$. Note that, in this construction, $X_{11}$ is less than all other sets, and hence every maximal
union-free subfamily of $\mathcal{F}$ contains $X_{11}$. Therefore,  deleting $X_{11}$ from $\mathcal{F}$ decreases the number of sets in it by one, but also
decreases by one the size of the maximum union-free subfamily. This shows that $f(n^2-1) \leq 2n-2$ and $f(n^2+n-1) \leq 2n-1$.
Of course, $f(m)$ is a monotone increasing function, and this gives the upper bound on $f(m)$ for all other values of $m$.
\end{proof}

\section{$a$-union-free subfamilies}

In this section we prove Theorem \ref{thm:main2_intro}, which says that every family of $m$ sets contains an
$a$-union-free subfamily of size at least $\max\{a,a^{1/4}\sqrt{m}/3\}$. We will
in fact prove a stronger theorem for general posets which implies
Theorem \ref{thm:main2_intro}. In a poset $P$, a
{\it $(\ell,\alpha)$-ladder} consists of a chain $X_1<\ldots < X_{\ell}$,
and $\ell$ antichains $\mathcal{Y}_1,\ldots,\mathcal{Y}_{\ell}$ each
of order $\alpha$ such that each $Y \in \mathcal{Y}_i$ satisfies $Y
\leq X_i$, and for each $i$ and each $Y \in \mathcal{Y}_{i+1}$, $Y
\not \leq X_i$. For $\alpha \geq 1$, the {\it size} of an $(\ell,\alpha)$-ladder is the
number of elements in $\mathcal{Y}_1 \cup \ldots \cup
\mathcal{Y}_{\ell}$, which is $\ell\alpha$. The size of an $(\ell,0)$-ladder we define to be $\ell$.
Notice that a chain of length $\ell$ forms a $(l,0)$-ladder.

\begin{lem} \label{lemma:ladder}
An $(l,\alpha)$-ladder in a family of sets contains an $a$-union-free subfamily of size at least the size of the ladder, which is $l \cdot \max\{\alpha,1\}$.
\end{lem}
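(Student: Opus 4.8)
The plan is to take the candidate $a$-union-free subfamily to be the entire collection $\mathcal{S}=\bigcup_{i=1}^{\ell}\mathcal{Y}_{i}$ of antichain elements (in the case $\alpha\geq 1$) and verify it directly. When $\alpha=0$ the ladder is just a chain $X_1<\cdots<X_\ell$, and the union of any $a$ distinct sets in a chain equals the largest among them, hence cannot be a further distinct set; so the chain itself is $a$-union-free and gives a subfamily of size $\ell=\ell\cdot\max\{\alpha,1\}$. For $\alpha\geq 1$, I would first record that the antichains are pairwise disjoint: if $Y\in\mathcal{Y}_{i+1}$ then $Y\not\leq X_i$, whereas every element of $\mathcal{Y}_1\cup\cdots\cup\mathcal{Y}_i$ is $\leq X_i$. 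Consequently each set of $\mathcal{S}$ has a well-defined level, $|\mathcal{S}|=\ell\alpha$ matches the claimed size, and it remains only to prove that $\mathcal{S}$ is $a$-union-free.

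I would argue by contradiction. Suppose there are distinct sets $Z_1,\dots,Z_a,Z_{a+1}\in\mathcal{S}$ with $Z_1\cup\cdots\cup Z_a=Z_{a+1}$, and let $r$ be the level of $Z_{a+1}$, so that $Z_{a+1}\leq X_r$. Each input $Z_j$ with $j\leq a$ satisfies $Z_j\subseteq Z_{a+1}\leq X_r$. The first step is to bound the levels of the inputs from above: if $Z_j\in\mathcal{Y}_s$ with $s>r$, then $X_r\leq X_{s-1}$ yields $Z_j\leq X_{s-1}$, contradicting the defining property $Z_j\not\leq X_{s-1}$ of $\mathcal{Y}_s$. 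Hence every $Z_j$ lies in a level at most $r$.

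The crux is then to exclude level $r$ itself for the inputs. Each $Z_j$ with $j\leq a$ is a subset of $Z_{a+1}$ and, being distinct from $Z_{a+1}$, is a \emph{proper} subset; since $\mathcal{Y}_r$ is an antichain, $Z_j$ cannot also lie in $\mathcal{Y}_r$. Therefore $Z_1,\dots,Z_a$ all lie in levels at most $r-1$, so each is $\leq X_{r-1}$, whence $Z_{a+1}=\bigcup_{j\leq a}Z_j\leq X_{r-1}$. But $Z_{a+1}\in\mathcal{Y}_r$ forces $Z_{a+1}\not\leq X_{r-1}$, a contradiction. (When $r=1$ there is no level below, so the inputs have nowhere to live and the contradiction is immediate.) This establishes that $\mathcal{S}$ is $a$-union-free.

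The step I expect to be the main obstacle is handling inputs that share the top level $r$ with $Z_{a+1}$: the escape condition $Z_{a+1}\not\leq X_{r-1}$ alone does not dispose of them, and one must invoke that $\mathcal{Y}_r$ is an antichain together with the distinctness of the $a+1$ sets. The whole argument is a tension between the two defining features of a ladder — the antichain property forces the inputs strictly below level $r$, and the escape property then prevents their union from reaching $Z_{a+1}$ — and the proof amounts to the observation that these two demands cannot be simultaneously met.
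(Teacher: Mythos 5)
Your proof is correct. It selects the same candidate subfamily as the paper (the chain when $\alpha=0$, and $\mathcal{S}=\bigcup_{i=1}^{\ell}\mathcal{Y}_i$ when $\alpha\geq 1$) and exploits the same two ladder properties, but it runs the verification in the opposite direction, and this difference has real content. The paper argues forward: it fixes a linear extension of the inclusion order, takes $a$ distinct sets with the largest one lying in $\mathcal{Y}_t$, asserts that all $a$ sets are then subsets of $X_t$, and rules out every possible level for their union. You argue backward, anchoring at the alleged union $Z_{a+1}\in\mathcal{Y}_r$: the inputs are forced into levels at most $r$ by the escape condition, pushed out of level $r$ by the antichain condition plus distinctness, and then $Z_{a+1}\subseteq X_{r-1}$ contradicts $Z_{a+1}\not\leq X_{r-1}$. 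Your orientation buys robustness in two places where the paper's write-up is loose. First, you need no linear extension; in the paper's argument the extension must implicitly be chosen compatible with the levels, since for an arbitrary extension the largest of the $a$ sets need not lie in their highest occupied level (a set from a higher level can be incomparable to, and hence placed below, a set from a lower level, which breaks the assertion that all $a$ sets are subsets of $X_t$). Second, the distinctness of the union from the inputs is an explicit hypothesis in your setup, whereas the paper's stated conclusion --- that the union of any $a$ distinct members of $\mathcal{S}$ lies outside $\mathcal{S}$ --- silently excludes the harmless case where the union equals the largest input (e.g.\ nested sets across levels), which is precisely where its antichain step would fail. You also make explicit the pairwise disjointness of the $\mathcal{Y}_i$, which justifies $|\mathcal{S}|=\ell\alpha$; the paper takes this as part of the definition without comment.
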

\begin{proof}
If $\alpha \le 1$, then we can just take the chain as the $a$-union-free
subfamily.
Otherwise, we claim that the collection $\mathcal{Y}=\bigcup_{i=1}^{l}\mathcal{Y}_{i}$
is an $a$-union-free family and this proves the lemma. Consider a
linear extension of the partial order on the sets given by inclusion. Let
$Y_{1},\cdots,Y_{a}$ be $a$ distinct sets in $\mathcal{Y}$ and assume without loss of generality that
$Y_{a}\in\mathcal{Y}_{t}$ is the largest of these $a$ sets
in this linear extension. By the definition of a ladder, each of these sets is a subset of $X_t$ and therefore the
union
$Y_{a+1}:=Y_1 \cup \ldots \cup Y_a$ of these $a$ sets is a subset of $X_t$. For $j>t$, we have $Y_{a+1} \not \in
\mathcal{Y}_j$ as no set in $\mathcal{Y}_j$ is a subset of $X_t$. Since $Y_a \subset Y_{a+1}$ and $\mathcal{Y}_t$ is an
antichain, $Y_{a+1} \not \in \mathcal{Y}_t$. If $Y_{a+1} \in \mathcal{Y}_j$ with $j<t$, then $Y_a \subset Y_{a+1}
\subset X_{t-1}$, which contradicts $Y_a \in \mathcal{Y}_t$ and no set in $\mathcal{Y}_t$ is a subset of $X_{t-1}$.
Thus $Y_{a+1} \not \in \mathcal{Y}$ and hence $\mathcal{Y}$ is $a$-union-free.
\end{proof}

In a poset $P$, a set $S$ of elements is {\it $a$-degenerate} if no element of $S$ is larger than $a$ other elements of $S$. It is easy to check that if $P$ is a family of sets with partial order defined by inclusion, then any $a$-degenerate subset is $a$-union-free. Thus in order to prove Theorem \ref{thm:main2_intro}, it is enough to establish the following theorem.

\begin{thm}\label{key}
Every poset $P$ on $m \geq a$ elements contains a subset of size at least $\max\{a,a^{1/4}\sqrt{m}/3\}$ which is $a$-degenerate or forms a
ladder.
\end{thm}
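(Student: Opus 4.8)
The plan is to separate the two terms in $\max\{a,a^{1/4}\sqrt m/3\}$. First I would dispose of the term $a$: any $a$ elements of $P$ form an $a$-degenerate set, since each of them is larger than at most $a-1<a$ of the others, and such a subset exists because $m\ge a$. Hence it suffices to produce a subset of size $B:=a^{1/4}\sqrt m/3$, and we may assume $B>a$, i.e. $m>9a^{3/2}$. I would then work with the level (height) decomposition $L_1,\ldots,L_t$, where each $L_i$ is an antichain and $\sum|L_i|=m$. Since an antichain is $a$-degenerate and a maximal chain is a $(t,0)$-ladder of size $t$, I may assume that every level has fewer than $B$ elements and that $t<B$.

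The structure I expect to carry the bound is an $a$-degenerate \emph{band}: a union of consecutive levels in which no element is larger than $a$ others of the band. (This is the right object: in the grid $[p]\times[p]$ a band of about $\sqrt a$ anti-diagonals is $a$-degenerate and has size $\approx\sqrt a\cdot p=\sqrt a\sqrt m\gg B$, whereas antichains, chains, and ladders there are only $\Theta(\sqrt m)$.) The key local observation is that if an element is larger than $\ge a$ members of a band of $k$ levels, then by pigeonhole it is larger than $\ge a/k$ of them at a single level, giving an antichain of size $\ge a/k$ lying within $k$ levels just below it — exactly the material for a ladder rung. So I would greedily partition the levels from the bottom into \emph{maximal} $a$-degenerate bands $\mathcal B_1,\ldots,\mathcal B_p$, extending each band upward until adding the next level would create an element larger than $a$ band-members; that element is a ``witness'' $y_j$ sitting essentially at the bottom of $\mathcal B_{j+1}$ and dominating a private antichain of size $\ge a/|\mathcal B_j|$ confined to $\mathcal B_j$.

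The argument then splits on $p$. If $p\le m/B$, the largest band has $\ge m/p\ge B$ elements and is $a$-degenerate, and we are done. Otherwise $p$ is large and we have many witnesses $y_1,\ldots,y_p$ at strictly increasing heights. Here I would apply Dilworth to the witnesses: a large antichain of witnesses is itself an $a$-degenerate set, while a long chain of witnesses can serve as the spine of a ladder whose rungs are the witnesses' private antichains. Because these antichains live in disjoint bands lying at increasing heights, the condition ``$Y\not\le X_i$ for $Y\in\mathcal Y_{i+1}$'' is satisfied automatically once the rungs are truncated to a common size $\alpha$, so the chain really does produce a valid ladder.

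The hard part will be the balancing, and this is where a crude Dilworth split is not enough. A ladder built from $\sqrt a$-sized rungs spaced $\sqrt a$ levels apart has length at most $t/\sqrt a$, hence size at most $t<B$; more generally rung-width $a/k$ with spacing $k$ gives at most $ta/k^2\le t<B$ once $k\ge\sqrt a$. Thus the $a^{1/4}\sqrt m/3$ bound can \emph{never} come from narrow rungs, and in the regime $9\sqrt a/4<t<B$ the whole weight must be carried either by a genuinely large $a$-degenerate band or by a ladder with unusually \emph{wide} rungs (witnesses dominating far more than $a$ band-members). The crux is therefore to choose the band widths and to run the witness chain/antichain dichotomy so that the rung-width versus number-of-bands trade-off still yields the clean constant $1/3$; I expect this to need a careful optimization together with the conceptual link that every ladder is capped by the maximum down-degree (all rungs lie below the top of the spine), which is precisely what forces the degeneracy and ladder regimes to interlock.
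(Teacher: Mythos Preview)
Your local setup is exactly right and matches the paper: the level decomposition, the idea of an $a$-degenerate band of consecutive levels, and the pigeonhole observation that an element dominating $a$ members of a band of $k$ levels dominates an antichain of size $\ge a/k$ inside that band. Where the proposal stalls is precisely where you say it does: the balancing. A single pass with greedy maximal bands followed by Dilworth on one witness per boundary cannot close the gap. Dilworth on $p$ witnesses yields only $\sqrt{p}$, and a chain of $\sqrt{p}$ witnesses with rungs truncated to a common size $\alpha\approx a/k$ gives a ladder of size $\sqrt{p}\cdot a/k$; your own calculation shows this is bounded by $t<B$ once $k\gtrsim\sqrt a$, and the bad-element count forces $k\gtrsim\sqrt a$ if you want at most half the elements to be bad. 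No amount of optimization of a single band width, nor dyadic bucketing of rung sizes, rescues this: the arithmetic gives at best $\Theta(a^{3/8}m^{1/4})$ from the ladder side, which is $o(B)$ for $m\gg a$. The missing conceptual step is that the spine of the ladder should not be assembled from isolated ``witnesses'' but from a long chain inside the sub-poset of \emph{nice} elements (those dominating $\ge a$ others in their band); and there is no a priori reason this sub-poset has a long chain.

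The paper's resolution is to \emph{iterate}. Fix a band width $x_0\approx\sqrt a$ and call an element \emph{bad} if it dominates fewer than $a$ others in its band. Either some band has $\ge B$ bad elements (done, $a$-degenerate), or the bad elements number at most $m/2$; delete them and let $P_1$ be the sub-poset of nice elements. Now dichotomize on the height of $P_1$: if it is at least $h_0/4$, any chain of that length in $P_1$ hits $\ge h_0/(4x_0)$ distinct bands, and each chain element---being nice---comes with a rung of size $\ge a/x_0$, giving a ladder of size $\ge (h_0/4x_0)(a/x_0)\approx h_0=B$. If instead the height of $P_1$ dropped below $h_0/4$, repeat with $m_1=m/2$, $h_1=h_0/4$, and $x_1=x_0/2$. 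The invariant $m_i=2^{-i}m$, $h_i=2^{-2i}h_0$, $x_i=2^{-i}x_0$ makes both the bad-count bound and the ladder-size bound hold at every step. After $T\approx\tfrac12\log_2 a$ steps one reaches $x_T=1$, and then $m_T/h_T\ge 2^T m/h_0\ge B$, so the surviving poset contains an antichain of size $B$. The geometric rescaling, not a cleverer one-shot optimization, is the idea you are missing.
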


The bounds in Theorems \ref{thm:main2_intro} and \ref{key} are tight up to an absolute constant factor for all $m$ and
$a$ by a construction of Barat et al. \cite{FuBaKaKiPa} which we will give at the end of this section.

\begin{proof}
Let $c=1/3$. If $m \leq c^{-2}a^{3/2}$, then we can take any $a$ elements as an $a$-degenerate set.
Since $a \geq ca^{1/4}\sqrt{m}$ we have nothing to prove in this range. Thus we will assume that
$m > c^{-2}a^{3/2}$. We will show that there exists a subset of size at least $ca^{1/4}\sqrt{m}$ which is $a$-degenerate or forms a ladder.
Since every poset on $m$ elements contains a chain or an antichain of size at least $\sqrt{m}$, and a chain forms a ladder and an antichain is $a$-degenerate,
we can also assume that $a>81$.

We may assume that the height of poset $P$ is less than $ca^{1/4}\sqrt{m}$ as otherwise we can take the
longest chain as the ladder. In each step, we will either find the desired subset, or we will delete some elements and
the height of the remaining poset will drop significantly compared to the drop in the number of remaining elements.

Let $P_0=P$, $h_0 = ca^{1/4}\sqrt{m}$, and $m_0 = m$. Suppose that after $i$ steps the remaining poset $P_i$ has at
least $m_i$ elements and height at most $h_i$ where $m_i = 2^{-i}m_0$ and $h_i = 2^{-2i}h_0$. Notice that this
condition is satisfied for $i=0$. For technical reason we need additional parameters $x_i$. Let $x_0=2^{T}$ for some
integer $T$ so that $2^T = \beta c^2 a^{1/2}$ with $\frac{9}{4} \leq \beta < \frac{9}{2}$ and let $x_i = 2^{-i}x_0$. Note that
such $T$ indeed exists since $\frac{9}{2}c^2 a^{1/2}>1$, and $x_i$ is an integer for
all $i=0,\cdots, T$.

Suppose we have just finished step $i \leq T-1$. Partition $P_i$ into levels $\mathcal{F}_1 \cup \ldots \cup
\mathcal{F}_{\lfloor h_i \rfloor}$,
where $\mathcal{F}_j$ is the minimal elements of the subposet of $P_i$ formed by deleting all $\mathcal{F}_k$ with
$k<j$ (some of the levels in the end can be empty). Partition the poset $P_i$ into sets $S_k$ each consisting of $x_i$
consecutive levels of $P_i$ (except possibly one set which consists of less than $x_i$ consecutive levels). Call an element $p \in S_k$ {\it nice} if it is larger than $a$ other elements of $S_k$,
and {\it bad} otherwise. For each $k$, the bad elements within $S_k$ form an $a$-degenerate set, and hence we may
assume $S_k$ contains less than $ca^{1/4}\sqrt{m}$ bad elements. Thus, the total number of bad elements is at most the
number of sets $S_k$ times $ca^{1/4}\sqrt{m}$, which is
\[ \left\lceil \frac{h_i}{x_i} \right\rceil ca^{1/4}\sqrt{m} =
\left\lceil \frac{2^{-2i}h_0}{2^{-i}x_0} \right\rceil ca^{1/4}\sqrt{m} \le \left( \frac{ca^{1/4}\sqrt{m}}{2^{i} x_0} +
1 \right) ca^{1/4}\sqrt{m} = \frac{m}{\beta \cdot2^{i}} + ca^{1/4}\sqrt{m}. \]
By $m \ge c^{-2}a^{3/2}$ and $i \leq T-1$, we have
$$ca^{1/4}\sqrt{m} =  ca^{1/4}\frac{m}{\sqrt{m}} \leq c^2a^{-1/2}m=\beta c^4 \frac{m}{2^T} \leq \frac{\beta c^4}{2} \frac{m}{2^i}.$$
Since $c=1/3$ and $\frac{9}{4} \leq \beta \leq \frac{9}{2}$, one can easily check $\frac{1}{\beta}+\frac{\beta c^4}{2} \leq
4/9+1/36<1/2$, so
the number of bad elements in $P_i$ is at most $m_i/2$.

Let $P_{i+1}$ be the subposet of $P_i$ consisting of the nice elements in $P_i$.
Since there are at most $m_i/2$ bad elements in $P_i$, the total number of
elements in $P_{i+1}$ is at least $m_i - m_i/2 = m_{i+1}$. If the
height of this poset is at least $h_{i+1}$, then by definition, we
can find a chain of length at least $h_{i+1}$. Let $X_1 < \ldots <
X_{\ell}$ be a subchain of this chain such that each $X_j$ belongs
to different sets $S_k$. Since $S_k$ is a union of $x_i$ levels, we
can guarantee such a subchain of length at least $\ell \ge
\left\lceil h_{i+1}/x_i \right\rceil$. By the definition of nice
elements, each element $X_j \in S_k$ is greater than $a$ other elements
in $S_k$. Therefore, $X_j$ is greater than $\alpha=\lceil a/x_i
\rceil$ elements which all belong to a single level of $S_k$. By the
construction of levels, these elements form an antichain. Denote
this antichain by $\mathcal{Y}_j$. We have thus constructed an
$(\ell,\alpha)$-ladder in $P$. The size of this ladder is
\begin{eqnarray*}
\ell\alpha & \geq & \left\lceil \frac{h_{i+1}}{x_i} \right\rceil
\cdot \left\lceil \frac{a}{x_i} \right\rceil \geq \frac{2^{-2i-2}h_0
\cdot a}{2^{-2i}x_0^2} = \frac{ca^{1/4}\sqrt{m}}{4 \cdot \beta^2c^4} \geq
ca^{1/4}\sqrt{m} ,\end{eqnarray*}
where we used that $x_0 = 2^T = \beta c^2a^{1/2}$, $\beta < 9/2$, $h_0=ca^{1/4}\sqrt{m}$ and $c=1/3$.
Thus we find a sufficiently large ladder to complete this case.

Otherwise the height of the poset $P_{i+1}$ is less than $h_{i+1}$, and since it contains at least $m_{i+1}$ elements and has height at most $h_{i+1}$, we can move on to the next step. Assume that the
process continues until $i=T$. Then we have a poset $P_T$ which has at least $m_T$ elements and has
height at most $h_T$. In this case, there exists an antichain of size at least
\[ \left\lceil \frac{m_T}{h_T} \right\rceil \ge \frac{2^{-T}m_0}{2^{-2T}h_0} = \frac{2^T m}{ca^{1/4}\sqrt{m}} \ge
\frac{c^2a^{1/2} m}{ca^{1/4}\sqrt{m}} =
ca^{1/4}\sqrt{m}.\]
An antichain is an $a$-degenerate set, so this concludes the proof.
\end{proof}

We end this section with the construction given in \cite{FuBaKaKiPa} of a family of sets which does not
contain a large $a$-union-free subfamily. Let $k = \lceil \sqrt{a} \rceil$.
Let $\mathcal{F}_1$ be the family of sets
given in the end of the previous section, i.e., it contains sets
$X_{ij}=\{x \in \mathbb{N} \, | \, n+1-i \leq x \leq n+j\}$ for all $1\leq i,j \leq n$.
A similar argument as in there shows that the largest $a$-union-free subfamily of $\mathcal{F}_1$ has
size at most $2k n$. Indeed we again assume that each set $X_{ij}$
is placed on the $(i,j)$ position of the $n \times n$ grid.
Let $\mathcal{F}'_1$ be an $a$-union-free subfamily of ${\cal F}_1$.
Delete from each column in the grid the bottommost $k$ sets in $\mathcal{F}'_1$.
Note that we removed at most $kn$ sets.
Then remove from each row in the grid the $k$ leftmost remaining sets in $\mathcal{F}'_1$. This removes at most $kn$ additional sets. Now there cannot be any
remaining set. Otherwise $\mathcal{F}'_1$ will contain sets $X_{i_1j}, \ldots, X_{i_kj}$ and  $X_{ij}$ with $i_{\ell}<i$ for all $1 \leq \ell \leq k$, such that
$ X_{ij}$ and each set $X_{i_{\ell}j}$ have at least $k$ other sets from $\mathcal{F}'_1$ in their column below them. This gives at least $k^2 \geq a$ sets (all properly contained in $X_{ij}$) from which one can easily choose $a$ sets whose union equals to $X_{ij}$. Since we removed at most $kn+kn=2kn$ sets, this
implies that $\mathcal{F}'_1$ has size at most $2kn$.

Note that set $X_{nn}$ contains all the other sets in
$\mathcal{F}_1$. Let $\mathcal{G}_2$ be a family of sets constructed in the
same manner but over a different (disjoint) universe of elements from $\mathcal{F}_1$,
and let $\mathcal{F}_2$ be the family $\{G \cup X_{nn} | G \in \mathcal{G}_2 \}$.
Thus every set in $\mathcal{F}_2$ contains all the sets in $\mathcal{F}_1$ and the $a$-union-free subfamily of $\mathcal{F}_2$ also has
size at most $2k n$.
Repeat this process and construct families
$\mathcal{F}_1, \mathcal{F}_2, \cdots, \mathcal{F}_k$ such that for all $\ell$,
every set in $\mathcal{F}_{\ell+1}$
contains all the sets in $\mathcal{F}_{\ell}$. Let $\mathcal{F}=\bigcup_{\ell=1}^{k} \mathcal{F}_{\ell}$ be our
family.

Then the number of sets in $\mathcal{F}$ is $kn^2$. We will use this family to obtain a bound on $g(m,a)$ for all values of $m$ such that
$k(n-1)^2 < m \le k n^2$. We can bound the size of the largest $a$-union-free subfamily $\mathcal{F}'$ of $\cal F$ as follows. Assume that $\ell$ is the first index such that there are more than $a$ sets of $\mathcal{F}'$ in $\mathcal{F}_1 \cup \cdots \cup \mathcal{F}_{\ell}$. Then in $\mathcal{F}'$ there
are less than $a$ sets in the levels up to $\mathcal{F}_{\ell-1}$, at most $2kn$ sets in $\mathcal{F}_{\ell}$, and it is easy to see for each $t > \ell$ that the sets in $\mathcal{F}'$ from $\mathcal{F}_t$ form a 2-union-free subfamily. Thus there are at most $2n$ such sets. Therefore the number of sets in $\mathcal{F}'$ is at most
\[ a + 2kn + (k-i)\cdot 2n \le a + 4k + 4k(n-1) \le 5a + 4(a^{1/4}+1)\sqrt{m}, \]
where we used the inequalities $m > k(n-1)^2$ and $a + 4\lceil \sqrt{a} \rceil \le 5a$ for all integers $a \ge 1$. This shows that the bound in
Theorem \ref{thm:main2_intro} is tight up to the constant factor for all $a$ and $m$.

\section{Concluding Remarks}

In this paper, we solved Moser's problem by determining the largest $f(m)$ for which every family of $m$ sets has a union-free subfamily of size $f(m)$. We do not know whether or not the family of sets given by Erd\H{o}s and Shelah \cite{ErSh} is essentially the only extremal family which shows that this bound is tight. It would be interesting to further study this problem of classifying the extremal families. Specifically, can one classify the extremal families which achieve the bound $f(m)=\lfloor \sqrt{4m+1}\rfloor -1$?

We also determined up to an absolute constant factor the largest $g(m,a)$ for which every family of $m$ sets has an $a$-union-free subfamily of size $g(m,a)$.
We did not try to optimize constants in the proof of Theorem \ref{thm:main2_intro} for the sake of clarity of presentation. Although the bound in this theorem can be further improved, some new ideas are needed to determine the asymptotic behavior of $g(m,a)$ for $a \geq 3$.

The proofs of Theorems \ref{thm:main1_intro} and \ref{thm:main2_intro} giving lower bounds for $f(m)$ and $g(m,a)$ can easily be made algorithmic. That is, we can
find a union-free or $a$-union-free subfamily of a family of $m$ with size guaranteed by these theorems in polynomial time.

There are other directions that have been studied concerning union-free subfamilies.
For example, Abbott and Hanson \cite{AH} proposed studying the minimum number of colors necessary to color the subsets of an $n$-element set so that each color
class is union-free. This problem was further studied by
Aigner, Duffus, and Kleitman \cite{AiDuKl}. A related result of Kleitman \cite{Kleitman}, improving on an earlier paper \cite{Kl66},
solved a conjecture of Erd\H{o}s by showing that the largest union-free family of subsets of an $n$-element set has cardinality at most $\left(1+\frac{c}{\sqrt{n}}\right){n \choose \lfloor
n/2 \rfloor}$, where $c$ is an absolute constant. We can also replace the union-free condition by other conditions.
For example, Gunderson, R\"odl, and Sidorenko \cite{GuRoSi} studied
the maximum cardinality of a family of subsets of an $n$-element set which does not contain a $d$-dimensional Boolean algebra. A related problem of
Erd\H{o}s and Shelah was solved by Barat, F\"{u}redi, Kantor, Kim, and Patkos \cite{FuBaKaKiPa},
who estimated the size of a maximum subfamily of a family of $m$ sets which does not contain a $d$-dimensional Boolean algebra.

\end{document}